\documentclass[12pt]{amsart}

\usepackage{fullpage}
\usepackage{amsmath}
\usepackage{amssymb}
\usepackage{amsthm}
\usepackage{amstext}
\usepackage{appendix}
\usepackage{perpage}
\usepackage{slashed}
\usepackage[x11names]{xcolor}
\usepackage{tikz-cd}
\usepackage{ytableau}\ytableausetup{centertableaux}
\usepackage[colorlinks=true, allcolors=blue]{hyperref}
\usepackage{todonotes}

\newtheorem{thm}{Theorem}[section]
\newtheorem{lemm}[thm]{Lemma}

\newtheorem{prop}[thm]{Proposition}

\theoremstyle{definition}

\newtheorem{defi}[thm]{Definition}
\newtheorem{remark}[thm]{Remark}

\begin{document}

\title{Revisiting the infinite  Galois correspondence via group representations}

\author{Zhe Chen}

\author{Enxin Wu}

\address{Department of Mathematics, Shantou University, Shantou, 515821, China}
\email{zhechencz@gmail.com}

\address{Department of Mathematics, Shantou University, Shantou, 515821, China}
\email{exwu@stu.edu.cn}

\begin{abstract}
Inspired by Lenstra's infinite normal basis theorem, we interpret the  Galois correspondence as a special case of a simple general principle in harmonic analysis on profinite groups. This also gives a representation-theoretic characterisation of profinite groups.
\end{abstract}

\maketitle

\section{Introduction}

Galois theory is fundamental in algebra, and is one of the most fascinating subjects often taught in undergraduate maths programmes. Its main theorem could be stated as: 

\begin{thm}\label{thm:classical}
Let $L/K$ be a (possibly infinite) Galois extension of fields with (profinite) Galois group $G$, and let $S_G:=\{ \textrm{closed subgroups of}\ G\}$. Then the map
$$H\longmapsto L^H$$
from $S_G$ to the set of intermediate fields in $L/K$ satisfies the following properties:
\begin{itemize}
\item[(i)] This map is an inclusion-reversing bijection.
\item[(ii)] If $H \subseteq G$ is a closed subgroup of finite index, then $\dim_K L^H=[G:H]$.
\item[(iii)] If $H \subseteq G$ is a normal closed subgroup, then $L^H/K$ is Galois with Galois group $G/H$.
\end{itemize}
\end{thm}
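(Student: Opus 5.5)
The plan is to follow the abstract: route everything through Lenstra's infinite normal basis theorem and a general statement about smooth representations of the profinite group $G$. Lenstra's theorem says that $L$, as a discrete $K[G]$-module with continuous action, is isomorphic to $C(G,K)^{\mathrm{lc}}$, the locally constant functions $G\to K$ with $G$ acting by right translation. This is the direct limit of the regular representations $K[G/U]$ over open normal subgroups $U$, and each step is an ordinary finite normal basis theorem applied to $L^U/K$. I will take this as the input. Under the isomorphism, fixed vectors correspond: for closed $H$, $L^H$ corresponds to the functions that are locally constant and right-$H$-invariant. These are the locally constant functions on the profinite space $H\backslash G$, or $G/H$ depending on the side convention. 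So the problem becomes one of harmonic analysis on $G$.

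For (ii), when $H$ has finite index it is open. Then $\mathrm{Map}(G/H,K)$ has $K$-dimension $[G:H]$, which transfers to $\dim_K L^H=[G:H]$.

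For (i), injectivity and inclusion reversal come from showing that $H$ is recovered as the stabiliser of its invariants: $H=\{g\in G: gf=f \text{ for all } f\in C(G/H,K)^{\mathrm{lc}}\}$. This holds because locally constant functions separate points of the profinite quotient space $G/H$, since $H$ is closed. If $g\notin H$, then $gH\neq H$, and the indicator function of a clopen set containing $H$ but not $gH$ is not fixed by $g$. Surjectivity is the hard step. Given an intermediate field $M$, I set $H=\{g: g|_M=\mathrm{id}\}$, which is closed, and need $L^H=M$. I would reduce to finite levels. For $x\in L^H$, pick an open normal $U$ with $x\in L^U$. Then $L^U/K$ is finite Galois with group $G/U$, and $x$ is fixed by the image of $H$, which is the stabiliser of $M\cap L^U$ in $G/U$. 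For the finite case I would again use only representation theory. The subspace $N=M\cap L^U$ is a $K$-subalgebra, hence a field. The invariants of its stabiliser $H'$ in the regular representation $K[G/U]$ have dimension $[G/U:H']$. I will show $\dim_K N\geq [G/U:H']$ by counting the distinct $K$-embeddings $N\to L^U$: they are all restrictions of elements of $G/U$, and there are at most $\dim_K N$ of them by Dedekind independence of characters. Since $N\subseteq (L^U)^{H'}$, equality of dimensions gives $N=(L^U)^{H'}$, so $x\in M$. I expect this counting step to be the main obstacle, because it is the only place where field structure, rather than linear representation theory, is genuinely needed. The first thing I would try is exactly this Dedekind-independence argument combined with extension of embeddings in the normal extension $L^U/K$.

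For (iii), if $H$ is normal, $G$ preserves $L^H$ and the action factors through $G/H$. The invariants of $G/H$ on $L^H$ are $L^G=K$, which follows from (i) or from the constant functions in the model. Normality and separability of $L^H/K$ follow because every element lies in some $L^U$, which is Galois. By the injectivity part of (i), $G/H$ acts faithfully, and it surjects onto $\mathrm{Gal}(L^H/K)$ because automorphisms of $L^H$ extend to $L$. Finally, continuity and compactness make the bijection $G/H\to\mathrm{Gal}(L^H/K)$ a homeomorphism of profinite groups.
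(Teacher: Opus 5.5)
\textbf{There is a genuine gap in your surjectivity argument.} It sits at the sentence asserting that the image of $H=\mathrm{Gal}(L/M)$ in $G/U$ \emph{is} the stabiliser $H'$ of $N=M\cap L^U$.

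Only the inclusion $HU/U\subseteq H'$ is automatic, and your argument needs the reverse. You pass from ``$x$ is fixed by $HU/U$'' to $x\in (L^U)^{H'}$. If $HU/U$ were a proper subgroup of $H'$, its fixed field would be strictly larger than $N$, and your finite-level result would say nothing about $x$.

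The reverse inclusion says that every automorphism of $L^U$ fixing only $M\cap L^U$ lifts to an automorphism of $L$ fixing all of $M$. That is exactly the infinite content of surjectivity. Given finite Galois theory, it is equivalent to $L^{\mathrm{Gal}(L/M)}=M$: if the latter holds, then $(L^U)^{HU/U}=L^U\cap M$, and finite injectivity gives $HU/U=H'$. So as written, the reduction assumes what it is meant to prove. The finite counting step you flagged as the main obstacle is not where the difficulty lies, and your Dedekind-independence argument for it is correct.

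\textbf{How to close the gap.} What is missing is a passage from the finite levels back to all of $M$, and a compactness argument in the spirit of the paper supplies it.

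Suppose $x\in L^H\setminus M$. For each finite Galois $F=L^U$ containing $x$, your finite result gives $N_F:=M\cap F=F^{\mathrm{Gal}(F/N_F)}$. Since $x\notin N_F$, some $\tau_F\in\mathrm{Gal}(F/N_F)$ moves $x$. Now set $S_F=\{g\in G:\ g|_{M\cap F}=\mathrm{id},\ gx\neq x\}$. Then:
\begin{itemize}
\item $S_F$ is nonempty, since $\tau_F$ lifts through $G\to G/U$.
\item $S_F$ is closed, being a union of cosets of $U$.
\item $S_{F'}\subseteq S_F$ whenever $F\subseteq F'$, and the $F$ form a directed family.
\end{itemize}
By compactness of $G$ there is some $g\in\bigcap_F S_F$. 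This $g$ fixes $M=\bigcup_F (M\cap F)$ and moves $x$, contradicting $x\in L^H$.

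Alternatively, use the classical argument. The minimal polynomial of $x$ over $M$ has a second root $x'\in L$, and the $M$-embedding $M(x)\to L$ sending $x\mapsto x'$ extends to an element of $\mathrm{Gal}(L/M)$.

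\textbf{Comparison with the paper.} The paper does not obtain surjectivity from representation theory at all. Theorem~\ref{thm: main}(i) is a bijection onto the Artin lattice, which consists of fixed subspaces by definition. Identifying intermediate fields with fixed fields is left to the classical Artin-type fact that $L/M$ is Galois.

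The rest of your proposal is fine: (ii), (iii), and injectivity via clopen separation in $G/H$, which amounts to the paper's $H=\bigcap_N HN$. One small inaccuracy: Lenstra's theorem is not merely a direct limit of finite normal-basis isomorphisms, since those must be chosen compatibly. You only use it as a black box, and in fact only its finite-level consequences, so this does not affect your argument.
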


When $L/K$ is a finite extension with automorphism group $G$, thanks to the celebrated normal basis theorem (for its history, see  \cite[Page-71~Notes]{milne_Fields_Galois_2022}), $L/K$ is Galois if and only if, as $K$-representations of $G$, one has 
$$L\cong K[G].$$
In 1985 Lenstra generalised the normal basis theorem to infinite extensions in \cite[Theorem~1]{Lenstra_normal_basis_thm_1985} (see also \cite{Lundstrom_cohom_normal_basis_thm_2007} for a different proof), by using the space of continuous functions $C(G,K)$ instead of $K[G]$. This inspires us, by forgetting the field extension $L/K$ and  focusing solely on the group $G$, to interpret Theorem~\ref{thm:classical} as a special case of a simple general property of regular representations of profinite groups.

\vspace{2mm} Recall that for a finite subgroup $H\subseteq G$, a theorem of Artin (see e.g.\ \cite[Corollary~3.11]{milne_Fields_Galois_2022}) asserts that $L/L^H$ is  Galois. This suggests  the following notion.

\begin{defi}\label{defi: submodules}
For $G$ a topological group and $R$ a  non-zero ring (not necessarily commutative or unital), let $C(G,R)$ be the left $R$-module of continuous maps from $G$ to $R$ where $R$ is equipped with the discrete topology. Endow $C(G,R)$ with the $G$-action given by $g\cdot f(-)=f(g^{-1}(-))$. The \emph{Artin lattice} of the pair $(G,R)$ is
$$\mathbb{A}(G,R)=\{ C(G,R)^H \mid H\ \textrm{is a closed subgroup of}\ G \},$$
in which $C(G,R)^H:=\{ f \in C(G,R) \mid h \cdot f(-) = f(-),\   \forall h \in H \}$.
\end{defi}

Artin lattice leads to the following exact generalisation of Theorem~\ref{thm:classical}:

\begin{thm}\label{thm: main}
Let  $G$ be a profinite group and $R$ a  non-zero ring. Then the map
$$\mathcal{F}\colon S_{G} \longrightarrow  \mathbb{A}(G,R)$$
given by 
$$H\longmapsto C(G,R)^H$$
satisfies the following properties:
\begin{itemize}
\item[(i)] The map $\mathcal{F}$ is an inclusion-reversing bijection, whose inverse is the (pointwise stabiliser)  map  
$$Z_G(-)\colon A\longmapsto  \{ g \in G \mid g \cdot a = a,\ \forall a \in A\}.$$
\item[(ii)] If $H\in S_G$ is of finite index, then $\mathcal{F}(H)\cong R^{\# G/H}$ as left $R$-modules.
\item[(iii)] If $H \in S_G$ is normal, then the embedding of $C(G/H,R)$ into $C(G,R)$ (via pre-composition with the quotient map $G \to G/H$) is an isomorphism onto $\mathcal{F}(H)=C(G,R)^H$  as   left $R$-modules of $G/H$.
\end{itemize}
\end{thm}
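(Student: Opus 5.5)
The plan is to reduce everything to the basic structure of continuous maps $G\to R$ with $R$ discrete. Since $G$ is compact, such an $f$ takes finitely many values. Each fibre $f^{-1}(r)$ is clopen. In a profinite group, the open normal subgroups form a neighbourhood basis of $1$, and every clopen set is a finite union of cosets of some open normal subgroup. Hence every $f\in C(G,R)$ factors through $G/N$ for some open normal $N$; in other words $f$ is invariant under left translation by $N$. Note that $g\cdot f=f$ for all $g\in H$ means $f(h^{-1}x)=f(x)$ for all $h\in H$ and $x\in G$, i.e.\ $f$ is constant on the right cosets $Hx$. So $C(G,R)^H$ is the set of continuous functions constant on the cosets $Hx$, that is, functions factoring continuously through $H\backslash G$.

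For (i), inclusion-reversal of $\mathcal{F}$ is immediate, and surjectivity holds by the definition of $\mathbb{A}(G,R)$. The key claim is $Z_G(\mathcal{F}(H))=H$ for every closed $H$; this gives injectivity and identifies the inverse. The inclusion $H\subseteq Z_G(\mathcal{F}(H))$ is trivial.

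For the converse, take $g\notin H$. I will construct $f\in C(G,R)^H$ with $g\cdot f\neq f$. Since $H$ is closed and $G$ is profinite, $H=\bigcap_N HN$ over open normal $N$, so there is an open normal $N$ with $g\notin HN$. The set $U=HN$ is an open subgroup, hence clopen. Fix a nonzero $r\in R$, which exists since $R\neq 0, and let $f=r\cdot 1_U$. This $f$ is continuous, and it is left $H$-invariant because $hU=U$. Finally $(g\cdot f)(1)=f(g^{-1})=0$, since $g^{-1}\notin U$, whereas $f(1)=r\neq0$.

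The main obstacle is the fact $H=\bigcap_N HN$. I would prove it by compactness: if $x\notin H$, then $Hx^{-1}$... more simply, $x^{-1}H$ is a closed set not containing $1$... Concretely, $H$ and $x$ are disjoint closed sets. The sets $HN$ are closed, being finite unions of cosets of $N$ after all. For each $h\in H$, choose $N_h$ with $hN_h\not\ni x$ in a uniform sense. The cleaner route is this: the $xN\cap H$ are nonempty compact sets with the finite intersection property if $x\in HN$ for all $N$, so their intersection $\{x\}\cap H$ would be nonempty, a contradiction. No ring hypotheses such as unitality are needed, because we only use one nonzero element and the module structure is pointwise.

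For (ii), if $[G:H]<\infty$ then $H$ is open, since its complement is a finite union of closed cosets. So $H\backslash G$ is finite and discrete, and every function on it is continuous. Hence $\mathcal{F}(H)$ is the set of all maps $H\backslash G\to R$, namely $R^{\#H\backslash G}=R^{\#G/H}$ as left $R$-modules, using $\#H\backslash G=\#G/H$ via $x\mapsto x^{-1}$.

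For (iii), let $\pi\colon G\to G/H$ be the quotient map and consider $\varphi\mapsto\varphi\circ\pi$. It is injective because $\pi$ is surjective. It is $R$-linear, and it is $G/H$-equivariant because $(gH)\cdot\varphi\circ\pi=g\cdot(\varphi\circ\pi)$. Its image is $H$-invariant, and since $H$ is normal, left and right cosets agree. Conversely, an $f\in C(G,R)^H$ is constant on cosets and so descends to $\varphi$ on $G/H$. This $\varphi$ is continuous because $G/H$ carries the quotient topology: $\pi^{-1}(\varphi^{-1}(r))=f^{-1}(r)$ is open. This gives the isomorphism onto $\mathcal{F}(H)$, and the $G$-action on $\mathcal{F}(H)$ factors through $G/H$.
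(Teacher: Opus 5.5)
Your proof is correct, and its skeleton agrees with the paper's. In both, (i) comes down to $H=\bigcap_N HN$ over open normal $N$, together with a function constant on right $HN$-cosets that is moved by any $x\notin HN$. The differences are in the local arguments.

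\textbf{Part (i).} The paper first proves the structural lemma $\mathcal{F}(H)=\bigcup_N \mathrm{Map}(HN\backslash G,R)$ and derives all three parts from it. For (i) it takes an open $V\supseteq H$ avoiding $x$, then runs a tube-lemma argument on the multiplication map to find $N_V$ with $HN_V\subseteq V$. It also proves separately that $Z_G(S)$ is closed for every subset $S\subseteq C(G,R)$. You get $H=\bigcap_N HN$ more quickly, from the finite intersection property of the compact sets $xN\cap H$, and you use the single explicit witness $r\cdot 1_{HN}$. You obtain closedness of $Z_G(A)$ for $A\in\mathbb{A}(G,R)$ for free from $Z_G\circ\mathcal{F}=\mathrm{id}$, which is all the theorem requires.

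\textbf{Part (iii).} The paper matches open normal subgroups of $G/H$ with the groups $HN$ and applies the lemma on both sides. You use only the universal property of the quotient topology, an argument that does not need profiniteness at all.

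\textbf{Trade-off.} The paper's route buys an explicit finite-level description of every $\mathcal{F}(H)$ and the stronger closedness statement for arbitrary $S$. Yours is shorter and more self-contained.

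One cosmetic addition: note that $Z_G$ is also visibly inclusion-reversing. Then $H_1\subseteq H_2$ if and only if $\mathcal{F}(H_1)\supseteq\mathcal{F}(H_2)$, which is the full sense of ``inclusion-reversing bijection''.
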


\begin{remark}
While a profinite group is compact, the above theorem does not hold for general compact groups: For example, for the  circle group $S^1$  we have 
$$\mathbb{A}(S^1,\mathbb{F}_2)=\{ \mathbb{F}_2 \};$$
so Theorem~\ref{thm: main}(i) fails. Actually, our proof of Theorem~\ref{thm: main} is topological and relies heavily on the profinite topology on $G$.
\end{remark}

There is  a partial converse to Theorem~\ref{thm: main}, which serves as a representation-theoretic characterisation of profinite groups:

\begin{thm}\label{thm: charac}
Let $G$ be a compact Hausdorff  group. Then $G$ is profinite if and only if, for one (or equivalently, every) non-zero ring $R$, the map $\mathcal{F}\colon S_G\rightarrow \mathbb{A}(G,R)$  in Theorem~\ref{thm: main} is a bijection.
\end{thm}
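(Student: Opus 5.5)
The "only if" direction is Theorem~\ref{thm: main}(i), which holds for every non-zero ring $R$. It remains to show that if $G$ is compact Hausdorff and $\mathcal{F}$ is a bijection for \emph{some} non-zero ring $R$, then $G$ is profinite. Since compact Hausdorff groups are profinite exactly when they are totally disconnected, I would show that the identity component $G^\circ$ is trivial. The "equivalently, every" clause then follows: bijectivity for one $R$ gives profiniteness, which gives bijectivity for all $R$ by Theorem~\ref{thm: main}.

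The key observation concerns continuous maps into a discrete ring. Let $f\in C(G,R)$. Each fibre $f^{-1}(r)$ is open, so $f$ is locally constant, and by compactness $f$ takes finitely many values. Hence $f$ is constant on every connected subset of $G$, in particular on each coset $xG^\circ$. For $h\in G^\circ$ and $x\in G$, the element $h^{-1}x$ lies in the connected set $G^\circ x = xG^\circ$ (recall $G^\circ$ is normal), so
$$(h\cdot f)(x)=f(h^{-1}x)=f(x).$$
Thus every $f\in C(G,R)$ is $G^\circ$-invariant, i.e.\ $C(G,R)^{G^\circ}=C(G,R)=C(G,R)^{\{1\}}$.

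Both $\{1\}$ and $G^\circ$ are closed subgroups: the first because $G$ is Hausdorff, the second because components are closed. So both lie in $S_G$, and $\mathcal{F}(\{1\})=\mathcal{F}(G^\circ)$. Injectivity of $\mathcal{F}$ forces $G^\circ=\{1\}$, so $G$ is totally disconnected and hence profinite.

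The only step needing care is the standard fact that a totally disconnected compact Hausdorff group is profinite. Concretely, one uses that identity neighbourhoods contain open subgroups (van Dantzig), and hence open normal subgroups by compactness. I would cite this rather than reprove it. The rest of the argument is elementary: continuous maps to a discrete space are constant on connected sets, and the cosets of $G^\circ$ are connected.
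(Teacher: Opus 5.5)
Your proof is correct and follows the paper's argument essentially verbatim: every $f\in C(G,R)$ is constant on the connected cosets $G^\circ x$, so $\mathcal{F}(G^\circ)=\mathcal{F}(\{1\})$; injectivity then forces $G^\circ=\{1\}$, and one cites that compact Hausdorff totally disconnected groups are profinite. As a minor aside, you do not need the normality of $G^\circ$, since any right translate $G^\circ x$ is already connected.
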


In Theorem~\ref{thm: main}, if one replaces the ring $R$ by a general discrete set consisting of at least two elements, then the proof in Section~\ref{sec:proof of main} implies that the same conclusion holds true by forgetting the module structure on $C(G,R)^H$.

\vspace{2mm} Note that even in the case $R=K$ is a field, Theorem~\ref{thm: main} strictly generalises the situation of Theorem~\ref{thm:classical} because it allows $K$ to be any field, including those admitting no non-trivial algebraic extensions.  On the other hand, it is not difficult to see that when $R=K$ is a field and $C(G,K)$ admits a field extension structure with the $G$-action being the field automorphism, the multiplication of the field is in general neither  the usual function multiplication nor the convolution multiplication. Moreover, note that Theorem~\ref{thm: main} is not a special case of Grothendieck's schematic Galois correspondence (see e.g.\ \cite[Theorem~5.4.2]{szamuely2009galois}) or of the Hopf--Galois correspondence (see e.g.\ Montgomery \cite[Section~6,7]{montgomery_2009_hopf}), because our base ring $R$ need not be commutative (nor even unital); we only need it to be non-zero.

\vspace{2mm} \noindent {\bf Acknowledgement.} We thank Stasinski for helpful comments on an earlier draft.

\section{Proofs of Theorem~\ref{thm: main} and of Theorem~\ref{thm: charac}}\label{sec:proof of main}

Our method uses only the following basic topological properties of profinite groups, which (and much more) can be found in  standard texts like \cite{Fried_Jarden_field_arith_2008_3rdedition} and \cite{Ribes_Zalesskii_profinite_2010_2nd}:

\vspace{2mm} If $G$ is a profinite group, then
\begin{itemize}
\item $G$ is Hausdorff;
\item $G$ is compact (in particular, its open subgroups are precisely the closed subgroups of finite index);
\item $G$ is totally disconnected;
\item the open normal subgroups of $G$ form a neighbourhood basis of the identity  $1$;
\item closed subgroups of $G$ are profinite;
\item the quotient of $G$ by a closed normal subgroup is  profinite.
\end{itemize}

\begin{lemm}\label{lemm-main}
Suppose that $G$ is profinite. Then for any $H \in S_G$ one has
\begin{equation*}
\mathcal{F}(H)=\sum_{N} \mathrm{Map}(HN\backslash G, R)=\bigcup_{N} \mathrm{Map}(HN\backslash G, R),
\end{equation*}
where $N$ runs over all open normal subgroups of $G$ and $\mathrm{Map}(HN\backslash G, R)$ means the  set of   maps from the set of right cosets $HN\backslash G$ to $R$. Here note that $HN$ is a closed (and open) subgroup
, and $\mathrm{Map}(HN\backslash G, R)$ is an $R$-submodule of $C(G,R)$ via the pre-composition with the quotient map $G \to HN\backslash G$.
\end{lemm}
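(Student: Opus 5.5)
The plan is to prove the chain of inclusions
$$\bigcup_N \mathrm{Map}(HN\backslash G,R)\subseteq \sum_N \mathrm{Map}(HN\backslash G,R)\subseteq \mathcal{F}(H)\subseteq \bigcup_N \mathrm{Map}(HN\backslash G,R),$$
which forces all three sets to coincide. The first inclusion is trivial. For the second, it suffices to check each summand. Since $N$ is open, $HN$ is an open subgroup (a union of cosets of $N$), hence closed. The quotient map $G\to HN\backslash G$ goes onto a finite discrete set, so any map $HN\backslash G\to R$ pulls back to a continuous function on $G$, that is, one constant on the right cosets $HNg$. Such a function $f$ satisfies $(h\cdot f)(x)=f(h^{-1}x)=f(x)$ for $h\in H$, because $h^{-1}x\in HNx$. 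Hence $f\in C(G,R)^H$. Since $\mathcal{F}(H)$ is an $R$-submodule, the sum of the summands also lies in $\mathcal{F}(H)$.

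The main step is the third inclusion. Fix $f\in\mathcal{F}(H)$. As $R$ is discrete and $f$ is continuous, $f$ is locally constant. By compactness of $G$, $f$ takes finitely many values, and the fibres $f^{-1}(r)$ form a finite cover of $G$ by open sets. I would then find a single open normal subgroup $N$ with $f(xn)=f(x)$ for all $x\in G$ and $n\in N$, in the standard way. For each $x$, choose an open normal $N_x$ such that $xN_x$ lies in the fibre of $f(x)$; this is possible because the open normal subgroups form a neighbourhood basis of $1$. Cover $G$ by finitely many $xN_x$, and let $N$ be the intersection of the corresponding finitely many $N_x$, which is again open and normal. If $y\in x_iN_{x_i}$ and $n\in N$, then $yn\in x_iN_{x_i}$ as well, so $f(yn)=f(x_i)=f(y)$. (Any $f$ constant on the cosets $xN_{x}$ in this refined sense suffices.)

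Next I combine this with $H$-invariance. Since $f=h^{-1}\cdot f$, we have $f(hx)=f(x)$ for all $h\in H$. Normality of $N$ gives $Nx=xN$, so $f$ is constant on $hnx=h(xn')$, that is, on every right coset $HNx$. Thus $f$ factors through $HN\backslash G$ and lies in $\mathrm{Map}(HN\backslash G,R)$, which proves the third inclusion.

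The obstacle I anticipate is the uniform-continuity step, namely producing one open normal $N$ that works globally. The compactness argument above handles it. Normality is what converts right-translation invariance into invariance under $HN$ acting on the left. This is also what makes $HN$ a subgroup in the first place.
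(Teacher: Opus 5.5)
Your proposal is correct and follows essentially the paper's argument. You use compactness and the basis of open normal subgroups to get a single open normal $N$ on whose cosets $f$ is constant, and then $H$-invariance together with normality of $N$ gives constancy on each $HNx$. Your chain of inclusions also makes explicit two points the paper leaves implicit: the easy containment $\sum_N \mathrm{Map}(HN\backslash G,R)\subseteq\mathcal{F}(H)$, and the equality of the sum with the union.
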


\begin{proof}
Let $f\in C(G,R)$. As each point in $R$ is open, there is a  partition of $G$ into open subsets such that $f$ is constant on each part. Since $G$ is profinite, the cosets of open normal subgroups form a topological basis, and therefore each part of the partition is a union of cosets. Since $G$ is compact, there exists a finite family of, and hence a single, open normal subgroup  $N_f$ such that each part of the partition is a union of some cosets of $N_f$. In particular, $f$ belongs to $\mathrm{Map}(N_f\backslash G, R)$. 

\vspace{2mm} If furthermore $H$ stabilises $f$, then $f$ would be constant on each coset of $HN_f$, and hence $f\in \mathrm{Map}(HN_f\backslash G, R)$. This implies that
$$\mathcal{F}(H)=C(G,R)^H=\sum_{N} \mathrm{Map}(HN\backslash G, R)$$
where $N$ runs over all open normal subgroups of $G$. 
\end{proof}

We shall  show that the pointwise stabiliser  map $Z_G(-)$ is well-defined, i.e.\ its images are closed; we prove the following stronger property:

\begin{prop}
Suppose that $G$ is profinite. Then for any subset $S\subseteq C(G,R)$, the pointwise stabiliser subgroup $Z_G(S)\subseteq G$ is closed.
\end{prop}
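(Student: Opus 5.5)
Since $Z_G(S)=\bigcap_{f\in S} Z_G(\{f\})$ and an arbitrary intersection of closed subgroups is a closed subgroup, it suffices to treat a single function $f\in C(G,R)$ and show that its stabiliser $Z_G(f)=\{g\in G \mid f(g^{-1}x)=f(x)\ \forall x\in G\}$ is closed. (That it is a subgroup is immediate from the action being a group action.)

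The plan is to show that $Z_G(f)$ is in fact \emph{open}, which implies closed for a topological group, since the complement of an open subgroup is a union of its cosets, each of which is open. By the first paragraph of the proof of Lemma~\ref{lemm-main}, there is an open normal subgroup $N_f$ such that $f$ is constant on each coset of $N_f$. Because $N_f$ is normal, left and right cosets agree, so $f(nx)=f(x)$ for all $n\in N_f$ and $x\in G$. Hence $n\cdot f(x)=f(n^{-1}x)=f(x)$ for every $n\in N_f$, i.e.\ $N_f\subseteq Z_G(f)$. A subgroup containing an open subgroup is a union of cosets of it, hence open, and therefore also closed.

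Alternatively, one can argue closedness directly without Lemma~\ref{lemm-main}. For fixed $x$, the map $g\mapsto f(g^{-1}x)$ is continuous into the discrete ring $R$, so $\{g \mid f(g^{-1}x)=f(x)\}$ is the preimage of a point, hence clopen. Intersecting over all $x\in G$ gives $Z_G(f)$ as an intersection of closed sets. This version even works for any topological group, but the first version gives the stronger information that stabilisers of single functions are open, which is likely useful later.

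There is no real obstacle. The only point needing care is the normality of $N_f$, which lets the constancy of $f$ on cosets be used for the left-translation action. That normality is supplied by Lemma~\ref{lemm-main}'s construction.
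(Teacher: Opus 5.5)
Your first argument is essentially the paper's proof: the open normal subgroup $N_f$ from Lemma~\ref{lemm-main} lies in $Z_G(f)$, so $Z_G(f)$ is an open, hence closed, subgroup (the paper phrases this as $\sigma N_f\cap Z_G(f)=\emptyset$ for $\sigma\notin Z_G(f)$), and you then intersect over $f\in S$. Your alternative is also correct: writing $Z_G(f)=\bigcap_{x\in G}\{g\mid f(g^{-1}x)=f(x)\}$ as an intersection of preimages of points under continuous maps shows that closedness needs no profiniteness at all, whereas openness of single-function stabilisers, which your first route gives, does genuinely use compactness.
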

\begin{proof}
Let $s\in C(G,R)$. By Lemma~\ref{lemm-main} there is an open normal subgroup $N_s$ of $G$ stabilising $s$. So for any $\sigma\in G\setminus Z_G(s)$  we have 
$$\sigma N_s\cap Z_G(s)=\emptyset,$$
namely $\sigma$ lies in an open neighbourhood outside $Z_G(s)$, which implies the closedness of $Z_G(s)$. Therefore $Z_G(S)=\bigcap_{s\in S}Z_G(s)$ is closed as well.
\end{proof}

Now we turn to the proof of Theorem~\ref{thm: main}.

\begin{proof}[Proof of Theorem~\ref{thm: main}]
(i) By definition $\mathcal{F}$ is surjective. So, since $Z_G(-)$ reverses the inclusion relation, we only need to prove the injectivity. It is clear that
\begin{equation}\label{temp1}
H \subseteq Z_{G}(C(G,R)^{H});
\end{equation}
if we could show that \eqref{temp1} is actually an equality then the assertion follows.

\vspace{2mm} Let  $x\in Z_{G}(C(G,R)^{H})$.  If $x$ is not contained in $H$, by the Hausdorff property there would be an open subset $V\subseteq G$ such that   $H\subseteq V$ and $x\notin V$. 

\vspace{2mm} Consider the multiplication map
$$m\colon G\times G\longrightarrow G;$$
by construction  $m^{-1}(V)$ is an open subset containing $H\times \{1\}$, so there are open subsets $U_i,U_i'\subseteq G$ ($i\in I$) satisfying that: $H\subseteq \bigcup_{i\in I} U_i$, $1\in  U'_i$ ($\forall i\in I$), and $\bigcup_{i\in I} (U_i\times U_i')\subseteq m^{-1}(V)$. Since $H$ is compact, we can find a finite subset $J$ of the index set $I$  such that  $H\subseteq \bigcup_{i\in J} U_i$. In particular, if we let $U':=\bigcap_{i\in J}U'_i$ (which contains $1$), then 
$$ H\times \{1\}\subseteq \left(\bigcup_{i\in J} U_i\right)\times U'=\bigcup_{i\in J} \left(U_i\times U'\right)\subseteq \bigcup_{i\in J} \left(U_i\times U_i'\right)\subseteq m^{-1}(V).$$
Since $U'$ is open (as $J$ is finite) and contains $1$, there is an open normal subgroup $N_V\subseteq G$ contained in $U'$, and thus
$$H\times N_V\subseteq  \left(\bigcup_{i\in J} U_i\right)\times U'\subseteq m^{-1}(V).$$
This implies that $HN_V\subseteq V$. 

\vspace{2mm} By Lemma~\ref{lemm-main} and the assumption that $R$ is non-zero, for every open normal subgroup $N\vartriangleleft G$ we have   $xHN=HN$, and hence $x\in HN$. So in particular 
$x\in HN_V$, which implies that $x\in V$, a contradiction. 

\vspace{2mm} Thus $x\in H$ and therefore \eqref{temp1} is an equality.

\vspace{2mm}(ii)
If $H$ is of finite index in $G$, then $H$ is open, and hence by Lemma~\ref{lemm-main} we get
$$\mathcal{F}(H)=C(G,R)^H\cong\mathrm{Map}(H\backslash G, R).$$
So $\mathcal{F}(H)\cong R^{\# G/H}$.

\vspace{2mm}
(iii)
Since $H$ is normal, the open normal subgroups of $G/H$  are in one-to-one correspondence (by taking pre-image) with  the open normal subgroups of $G$ containing $H$; the latter groups are precisely those of the form $HN$ for $N$ being open normal in $G$. So the assertion follows immediately from Lemma~\ref{lemm-main}.
\end{proof}

\begin{proof}[Proof of Theorem~\ref{thm: charac}]
It suffices to prove the ``if'' part, and for this it suffices to prove that $G$ is totally-disconnected (see e.g.\ \cite[Theorem~1.1.12]{Ribes_Zalesskii_profinite_2010_2nd}).

\vspace{2mm} Let $G^0$ be the connected component of $G$ containing the identity $1$; this is a closed connected subgroup of $G$. Given $f\in C(G,R)$, for any $x\in G$ and $g\in G^0$ one has
$$g\cdot f(x)=f(g^{-1}x)=f(G^0x)=f(x),$$
in which the second and the third equalities follow from that $f$ takes a connected set to a connected set. So
$$C(G,R)=C(G,R)^{G^0}.$$
Since obviously $C(G,R)=C(G,R)^{\{1\}}$, we get $C(G,R)^{G^0}=C(G,R)^{\{1\}}$. As $\{1\}$ is a closed subgroup of $G$ by the Hausdorff property, the injectivity of  $\mathcal{F}\colon S_{G} \rightarrow  \mathbb{A}(G,R)$ implies that $G^0=\{1\}$. So $G$ is totally-disconnected.
\end{proof}

\bibliographystyle{alpha}
\bibliography{zchenrefs}

\end{document}